\documentclass{article}%
\usepackage{graphicx}
\usepackage{amsmath}
\usepackage{amsfonts}
\usepackage{amssymb}%
\providecommand{\U}[1]{\protect\rule{.1in}{.1in}}
\providecommand{\U}[1]{\protect\rule{.1in}{.1in}}
\providecommand{\U}[1]{\protect\rule{.1in}{.1in}}
\providecommand{\U}[1]{\protect\rule{.1in}{.1in}}
\providecommand{\U}[1]{\protect\rule{.1in}{.1in}}
\providecommand{\U}[1]{\protect\rule{.1in}{.1in}}
\providecommand{\U}[1]{\protect\rule{.1in}{.1in}}
\providecommand{\U}[1]{\protect\rule{.1in}{.1in}}
\providecommand{\U}[1]{\protect\rule{.1in}{.1in}}
\providecommand{\U}[1]{\protect\rule{.1in}{.1in}}
\providecommand{\U}[1]{\protect\rule{.1in}{.1in}}
\providecommand{\U}[1]{\protect\rule{.1in}{.1in}}
\providecommand{\U}[1]{\protect\rule{.1in}{.1in}}
\providecommand{\U}[1]{\protect\rule{.1in}{.1in}}
\providecommand{\U}[1]{\protect\rule{.1in}{.1in}}
\providecommand{\U}[1]{\protect\rule{.1in}{.1in}}
\providecommand{\U}[1]{\protect\rule{.1in}{.1in}}
\providecommand{\U}[1]{\protect\rule{.1in}{.1in}}
\providecommand{\U}[1]{\protect\rule{.1in}{.1in}}
\providecommand{\U}[1]{\protect\rule{.1in}{.1in}}
\providecommand{\U}[1]{\protect\rule{.1in}{.1in}}
\providecommand{\U}[1]{\protect\rule{.1in}{.1in}}
\providecommand{\U}[1]{\protect\rule{.1in}{.1in}}
\providecommand{\U}[1]{\protect\rule{.1in}{.1in}}
\providecommand{\U}[1]{\protect\rule{.1in}{.1in}}
\providecommand{\U}[1]{\protect\rule{.1in}{.1in}}
\providecommand{\U}[1]{\protect\rule{.1in}{.1in}}
\providecommand{\U}[1]{\protect\rule{.1in}{.1in}}
\providecommand{\U}[1]{\protect\rule{.1in}{.1in}}

\newtheorem{theorem}{Theorem}
{}

\newtheorem{definition}{Definition}

\newtheorem{lemma}{Lemma}
{}

\newtheorem{remark}{Remark}

\newenvironment{proof}[1][Proof]{\textbf{#1.} }{\ \rule{0.5em}{0.5em}}

\begin{document}

\title{Spectrality of the Dirac Operator with Complex-Valued Periodic Coefficients}
\author{O. A. Veliev\\{\small \ Dogus University, }\\{\small Esenkent 34755, \ Istanbul, Turkey.}\\\ {\small e-mail: oveliev@dogus.edu.tr}}
\date{}
\maketitle

\begin{abstract}
In this paper, we study the spectrality of the non-self-adjoint Dirac operator
$L(Q)$ with a complex-valued periodic $2\times2$ matrix potential $Q.$ We
establish a condition on the off-diagonal elements of the matrix $Q$ under
which $L(Q)$ is an asymptotically spectral operator. Moreover, we derive a
condition on $Q$ that ensures the spectrality of this operator. Finally, we
consider the spectral expansion in these cases.

Key Words: Dirac operator, Spectrality, Spectral expansion.

AMS Mathematics Subject Classification: 34L05, 34L20.

\end{abstract}

\section{ Introduction and Preliminary Facts}

In this paper, we consider the one-dimensional Dirac operator $L(Q)$ generated
in the space $L_{2}^{2}(-\infty,\infty)$ of $2$-coordinate complex-valued
vector functions $\mathbf{y}(x)\mathbf{=}\left(
\begin{array}
[c]{c}%
y_{1}(x)\\
y_{2}(x)
\end{array}
\right)  $ by the differential expression
\begin{equation}
l(\mathbf{y,}Q)=J\mathbf{y}^{^{\prime}}(x)+Q\mathbf{y}(x), \tag{1}%
\end{equation}
where $J=\left(
\begin{array}
[c]{cc}%
0 & 1\\
-1 & 0
\end{array}
\right)  ,$ $Q(x)=\left(
\begin{array}
[c]{cc}%
a_{1}(x) & a_{2}(x)\\
a_{3}(x) & a_{4}(x)
\end{array}
\right)  $ and $a_{k}$ for $k=1,2,3,4$ are $\pi$-periodic, complex-valued
functions of bounded variation on $[0,\pi]$. We prove that if
\begin{equation}
\operatorname{Re}%
{\textstyle\int\nolimits_{0}^{\pi}}
a_{3}(x)-a_{2}(x)dx\neq0, \tag{2}%
\end{equation}
then $L(Q)$ is an asymptotically spectral operator. Moreover, we derive a
condition on $Q$ that ensures the spectrality of this operator. Finally, we
discuss the spectral expansion in these cases.

It is well-known [7, 13, 14, 18] that the spectrum $\sigma(L(Q))$ of the
operator $L(Q)$ is the union of the spectra $\sigma(L_{t}(Q))$ of the
operators $L_{t}(Q)$ for $t\in(-1,1]$ generated in $L_{2}^{2}[0,\pi]$ by (1)
and the boundary condition
\begin{equation}
\mathbf{y}(\pi)=e^{i\pi t}\mathbf{y}(0). \tag{3}%
\end{equation}
According to the definition given in [14] (see Definition 3.2 of [14] ), we
use the following definition of the spectrality and asymptotic spectrality

\begin{definition}
Let $\gamma(t)$ be a closed contour lying in the resolvent set $\rho\left(
L_{t}(Q)\right)  $ of $L_{t}(Q).$ Define $e(\gamma(t))$ by
\[
e(\gamma(t))=\int_{\gamma(t)}\left(  L_{t}(Q)-\lambda I\right)  ^{-1}%
d\lambda\text{ }.
\]
The operator $L$ is said to be a spectral operator if
\begin{equation}
\sup_{t\in(-1,1]}\left(  \sup_{\gamma(t)\subset\rho\left(  L_{t}\right)
}\left\Vert e(\gamma(t))\right\Vert \right)  <\infty, \tag{4}%
\end{equation}
where $\sup$ is taken for all $\gamma(t)\subset\rho\left(  L_{t}\right)  .$
The operator $L$ is said to be an asymptotically spectral operator if there
exists $M$ such that (4) holds for all $\gamma(t)\subset\left(  \{\lambda
\in\mathbb{C}:\mid\lambda\mid>M\}\cap\rho\left(  L_{t}\right)  \right)  .$
\end{definition}

Note that in papers [19--23], we considered the spectrality for operator
generated by differential expression of order $m\geq2.$ Those investigations
does not imply the spectrality for the Dirac operator generated by first-order
differential expression (1). It is important to note that, the one dimensional
Schrodinger operator $T(q),$ generated in the space $L_{2}(-\infty,\infty)$ by
the differential expression $-y^{\prime\prime}+qy$ with complex -valued
potential $q$ is, in general, not a spectral operator. Gesztezy and Tkachenko
[8] proved two versions of a criterion for the operator $T(q)$ to be a
spectral operator, in sense of Dunford [5], one analytic and one geometric.
The analytic version was stated in terms of the solutions of the Hill
equation. The geometric version of the criterion uses algebraic and
geometric\ properties of the spectra of the periodic/antiperiodic and
Dirichlet boundary value problems. The problem of explicitly describing for
which potentials $q$ the Schrodinger operators $T(q)$ are spectral operators
has remained open for about 65 years. In [19] (see also [22, Sect. 2.7]), I
found explicit conditions on the potential $q$ such that $T(q)$ is an
asymptotically spectral operator. However, the following well-known examples
demonstrate that the spectrality of $T(q)$ is a very rare phenomenon. Gasimov
[6] proved that the operator $T(q)$ with the potential
\[
q(x)=%
{\textstyle\sum\limits_{n=1}^{\infty}}
q_{n}e^{i2\pi x}%
\]
has, in general, infinitely many spectral singularities. Recall that the
spectral singularity of $T(q)$ is a point of its spectrum $\sigma(T(q))$ in
neighborhood on which the projections of $T(q)$ are not uniformly bounded.
Therefore, existence a spectral singularity does not allow $T(q)$ to be a
spectral operator. In the case $q(x)=ae^{i2\pi x}$ and $a\neq0.$ The numbers
$\left(  \pi n\right)  ^{2}$ for $n\neq0$ are the spectral singularities for
all $n\in\mathbb{Z}$ (see [22, Sect. 3.3]).

The other demonstration is the Mathieu--Schr\"{o}dinger operators $H(a,b)$
generated in $L_{2}(-\infty,\infty)$ by differential expression
\begin{equation}
-y^{\prime\prime}+(ae^{i2\pi x}+be^{-i2\pi x})y. \tag{6}%
\end{equation}
Djakov and Mityagin [1] proved that the root functions of the operators
$H_{0}(a,b)$ or $H_{\pi}(a,b)$ generated in $L_{2}[0,1]$ by periodic or
antiperiodic boundary conditions form a Riesz basis if and only if $\mid
a\mid=\mid b\mid$ (see Theorem 7 in [1]). Moreover, in [1] (see page 539) it
was noted that Theorem 7 in [1] with Remark 8.10 in [8] imply that $\ H(a,b)$
is not a spectral operator, if $|a|\neq\left\vert b\right\vert .$ For another
proof of this statement, see Section 4.3 in [22] (Proposition 4.3.1). It is
important to emphasize that the spectrality of $H_{0}(a,b)$ and $H_{\pi}%
(a,b)$, and even the spectrality of the operators $H_{t}(a,b)$ generated in
$L_{2}[0,1]$ by $t$-periodic boundary conditions, for all $t\in(-\pi,\pi]$,
does not imply the spectrality of $H(a,b)$. In Remark 4.3.5 of [22], I noted
that a detailed investigation of $H_{0}(a,b)$ and $H_{\pi}(a,b)$ cannot
provide a sufficient condition for the spectrality of $\ H(a,b)$. This is
because multiple eigenvalues of $H_{t}(a,b)$ for $t\neq0,\pi$ become spectral
singularities of $H(a,b)$, which in turn implies that $H(a,b)$ is not a
spectral operator. Such multiple eigenvalues may indeed arise for $t\neq0,\pi$
(see Theorem 4.3.1 in [22]). Thus, the condition $\mid a\mid=\mid b\mid$ is
not sufficient for the spectrality of $H(a,b)$. In [21], Theorem 1 and in [22]
Theorem 4.1.1, I proved that $H(a,b)$ is an asymptotically spectral operator
if and only if $\mid a\mid=\mid b\mid$ and
\begin{equation}
\text{ }\inf_{q,p\in\mathbb{N}}\{\mid q\alpha-(2p-1)\mid\}\neq0, \tag{7}%
\end{equation}
where $\alpha=\pi^{-1}\arg(ab)$, $\mathbb{N=}\left\{  1,2,...,\right\}  .$ For
example, if $\alpha=\frac{m}{q},$ where $m$ is an odd integer and $m,q$ are
coprime, then $H(a,b)$ is not even an asymptotically spectral operator (see
[22], Theorem 4.3.4). In other word, even when $\mid a\mid=\mid b\mid$ if
condition (7) fails, then $H(a,b)$ is not a spectral operator. Moreover,
Theorem 1 of [21] implies that if $ab\in\mathbb{R}$, then $T(q)$ is a spectral
operator if and only if it is self adjoint (see Corollary 1 of [21]). Thus, in
this case, there are no spectral operators that are not self-adjoint. These
examples show that the spectrality of $T(q)$ is a very rare phenomenon and
does not depend on the smoothness or smallness of $q.$ We encounter the same
situation for the Dirac operator \ $L(Q)$ when $q_{2}=q_{3}.$ Therefore,
condition (2) is essential and helps us to consider the spectrality of $L(Q).$
Finally, note that in [23], to demonstrate that the theory of spectral
operators is not applicable to the non-self-adjoint Schr\"{o}dinger operator
$T(q)$, I discussed only examples for which there exist necessary and
sufficient conditions on $q$ for the spectrality of $T(q).$

In next section (Section 2) we investigate asymptotic formulas for the Bloch
eigenvalues and Bloch functions. In Section 3, we investigate the asymptotic
spectrality and spectrality of $L(Q)$ and discuss the corresponding spectral
expansion. Throughout the paper, $m_{1},m_{2},...$ denote positive constants
that do not depend on $t,$ $\lambda$ and $x.$ They are used in the sense that,
for some inequality, there exists a constant $m_{i}$ such that the inequality holds.

\section{Asymptotic Formulas for Bloch eigenvalues and Bloch functions}

Let $\mathbf{c}(x,\lambda)=\left(
\begin{array}
[c]{c}%
c_{1}(x,\lambda)\\
c_{2}(x,\lambda)
\end{array}
\right)  $ and $\mathbf{s}(x,\lambda)=\left(
\begin{array}
[c]{c}%
s_{1}(x,\lambda)\\
s_{2}(x,\lambda)
\end{array}
\right)  $ be the solutions of the equation%
\begin{equation}
l(\mathbf{y,}Q)=\lambda\mathbf{y} \tag{8}%
\end{equation}
satisfying the following initial conditions%

\begin{equation}
c_{1}(0,\lambda)=s_{2}(0,\lambda)=1,\text{ }s_{1}(0,\lambda)=c_{2}%
(0,\lambda)=0. \tag{9}%
\end{equation}
Substituting the general solution $x_{1}\mathbf{c}(x,\lambda)+x_{2}$
$\mathbf{s}(x,\lambda)$ into boundary condition (3) and using (9), we obtain
the following system of equations
\begin{equation}
\left\{
\begin{array}
[c]{c}%
x_{1}(c_{1}(\pi,\lambda)-e^{i\pi t})+x_{2}s_{1}(\pi,\lambda)=0\\
x_{1}c_{2}(\pi,\lambda)+x_{2}(s_{2}(\pi,\lambda)-e^{i\pi t})=0
\end{array}
\right.  . \tag{10}%
\end{equation}
A number $\lambda$ is an eigenvalue of $L_{t}(Q)$ if and only if it is a root
of the characteristic equation
\[
\Delta(\lambda,t)=\left\vert
\begin{array}
[c]{cc}%
c_{1}(\pi,\lambda)-e^{i\pi t} & s_{1}(\pi,\lambda)\\
c_{2}(\pi,\lambda) & s_{2}(\pi,\lambda)-e^{i\pi t}%
\end{array}
\right\vert =e^{2i\pi t}-e^{i\pi t}F(\lambda)+W(\pi,\lambda)=0,
\]
where $F(\lambda):=c_{1}(\pi,\lambda)+s_{2}(\pi,\lambda)$ and $W(\pi
,\lambda)=c_{1}(\pi,\lambda)s_{2}(\pi,\lambda)-c_{2}(\pi,\lambda)s_{1}%
(\pi,\lambda).$ Thus, the eigenvalues $\lambda(t)$ of $L_{t}(Q)$ are the roots
of the equation%
\begin{equation}
F(\lambda)=e^{i\pi t}+e^{-i\pi t}W(\pi,\lambda). \tag{11}%
\end{equation}
If $\lambda(t)$ is a simple eigenvalue and $s_{1}(\pi,\lambda(t))\neq0,$ then
the corresponding eigenfunction $\Phi_{t}(x)$ can be written in the form
\begin{equation}
\Phi_{t}(x)=\mathbf{c}(x,\lambda(t))+\frac{e^{i\pi t}-c_{1}(\pi,\lambda
(t))}{s_{1}(\pi,\lambda(t))}\mathbf{s}(x,\lambda(t)). \tag{12}%
\end{equation}

It is well-known (see [25]) that if $f$ is a function of bounded variation on
$[0,\pi],$ then
\begin{equation}
\int_{0}^{x}f(t)\cos\lambda tdt=O\left(  \frac{1}{\lambda}\right)  ,\text{
}\int_{0}^{x}f(t)\sin\lambda tdt=O\left(  \frac{1}{\lambda}\right)  , \tag{13}%
\end{equation}
as $\lambda\rightarrow\infty$. Moreover, $O\left(  \frac{1}{\lambda}\right)  $
is independent of $x\in\lbrack0,\pi];$ that is, these estimates hold uniformly
with respect to $x\in\lbrack0,\pi].$ Therefore, by applying Theorems 1 and 2
of [15] or Theorems 2.5 and 2.6 of [16], we obtain the following asymptotic
formulas, which are uniform with respect to $x\in\lbrack0,\pi]$:
\begin{equation}
s_{1}(x,\lambda)=-e^{a(x)}\sin\lambda x+O\left(  \frac{1}{\lambda}\right)
,\text{ }s_{2}(x,\lambda)=e^{a(x)}\cos\lambda x+O\left(  \frac{1}{\lambda
}\right)  \tag{14}%
\end{equation}
and
\begin{equation}
c_{1}(x,\lambda)=e^{a(x)}\cos\lambda x+O\left(  \frac{1}{\lambda}\right)
,\text{ }c_{2}(x,\lambda)=e^{a(x)}\sin\lambda x+O\left(  \frac{1}{\lambda
}\right)  , \tag{15}%
\end{equation}
where $a(x)=$ $\frac{1}{2}%
{\textstyle\int\nolimits_{0}^{x}}
a_{3}(\xi)-a_{2}(\xi)d\xi.$ Hence, we have
\[
F(\lambda)=2e^{\pi b}\cos\pi\lambda+O\left(  \frac{1}{\lambda}\right)  ,\text{
}W(\pi,\lambda)=e^{2\pi b}+O\left(  \frac{1}{\lambda}\right)  ,
\]
where $b=\frac{1}{2\pi}%
{\textstyle\int\nolimits_{0}^{\pi}}
a_{3}(x)-a_{2}(x)dx=\frac{1}{\pi}a(\pi)$. \ Thus, (11) can be written in the
form
\[
2e^{\pi b}\cos\pi\lambda=e^{i\pi t}+e^{-i\pi t}e^{2\pi b}+O\left(  \frac
{1}{\lambda}\right)  .
\]
Dividing this equality by $2e^{\pi b}$ and using well-known Euler's formulas
we obtain
\begin{equation}
\cos\pi\lambda=\cos\pi(2n\pm t\pm ib)+g(\lambda),\text{ }\left\vert
g(\lambda)\right\vert <\frac{m_{1}}{\left\vert \lambda\right\vert } \tag{16}%
\end{equation}
as $\lambda\rightarrow\infty$ for some $m_{1}>0.$ Now, using (16),
Rouch\'{e}'s theorem and (12), in the standard way, we obtain the following
asymptotic formulas.

\begin{theorem}
\textit{\ If} (2) holds, \textit{then: }

$(a)$ \textit{The eigenvalues of }$L_{t}(Q),$ for $t\in(-1,1),$%
\textit{\ consist of two sequences }$\left\{  \lambda_{n,1}(t);\text{ }%
n\in\mathbb{Z}\right\}  $ and $\left\{  \lambda_{n,2}(t);\text{ }%
n\in\mathbb{Z}\right\}  $\ which satisfy the following asymptotic formulas%
\begin{equation}
\lambda_{n,1}(t)=(2n-t-ib)+O(\frac{1}{n}),\text{ }\lambda_{n,2}%
(t)=(2n+t+ib)+O(\frac{1}{n}). \tag{17}%
\end{equation}
These formulas are uniform with respect to $t\in(-1,1].$ There exists $N$ such
that $\lambda_{n,j}(t)$ are the simple eigenvalues of $L_{t}(Q)$ for all
$\left\vert n\right\vert \geq N,$ $j=1,2,$ and $t\in(-1,1].$

$(b)$ The\textit{ }eigenfunctions $\Phi_{n,1,t}(x)$ and $\Phi_{n,2,t}(x)$ of
$L_{t}(Q),$ defined by (12) and corresponding to the eigenvalue,
$\lambda_{n,1}(t)$ and $\lambda_{n,2}(t)$\ \textit{satisfy} the following
asymptotic formula%
\begin{equation}
\Phi_{n,1,t}(x)=\left(
\begin{array}
[c]{c}%
1\\
i
\end{array}
\right)  e^{a(x)}e^{i(-2n+t+ib)x}+O(\frac{1}{n}),\text{ }\Phi_{n,2,t}%
(x)=\left(
\begin{array}
[c]{c}%
1\\
-i
\end{array}
\right)  e^{a(x)}e^{i(2n+t+ib)x}+O(\frac{1}{n})\text{ }, \tag{18}%
\end{equation}
These formulas are uniform with respect to $t\in(-1,1]$ and $x\in\lbrack
0,\pi].$
\end{theorem}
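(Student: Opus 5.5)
We follow the route indicated before the statement: work with (16), localize its roots by Rouch\'{e}'s theorem, and then insert the asymptotics (14), (15) into (12).

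\emph{Reduction of (16).} Write (16) as
\[
\cos\pi\lambda-\cos\pi\mu_{\pm}=g(\lambda),\qquad \mu_{\pm}:=t+ib .
\]
Every number $2n\pm\mu_{+}$ is a root of $\cos\pi\lambda-\cos\pi\mu_{+}$. Condition (2) says that $\operatorname{Re}b\neq0$, i.e. $\operatorname{Im}(ib)\neq0$. Hence $\mu_{+}=t+ib$ is never an integer, for every $t\in(-1,1]$. Moreover, the two families $2n-t-ib$ and $2n+t+ib$ have imaginary parts $-\operatorname{Re}b$ and $+\operatorname{Re}b$. These differ by $2|\operatorname{Re}b|>0$ uniformly in $t$ and $n$. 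Therefore all roots of $\cos\pi\lambda-\cos\pi\mu_{+}$ are simple and separated by at least $\delta:=\min(2|\operatorname{Re}b|,1)>0$, independently of $t$. This uniform separation is the key point. Without (2), one gets a double root when $t=0$ or $t=1$, and the argument breaks down.

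\emph{Localization of the eigenvalues (part (a)).} Fix a radius $r<\delta/3$. On the circle $|\lambda-(2n\mp\mu_{+})|=r$ we have
\[
|\cos\pi\lambda-\cos\pi\mu_{+}|=2|\sin\tfrac{\pi}{2}(\lambda-\mu_+)\sin\tfrac{\pi}{2}(\lambda+\mu_+)|\ge m_2 r .
\]
The constant $m_2$ is uniform because both sine factors are bounded below: one factor is near a simple zero, and the other is at distance at least $\delta/2$ from its zeros, while the imaginary parts stay bounded. Since $|g|<m_1/|\lambda|$, Rouch\'{e}'s theorem gives exactly one root in each such disk for $|n|\ge N$.

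Next we count. We use the contour $|\operatorname{Re}\lambda|=2N+1$ together with large horizontal lines, and apply Rouch\'{e} again outside the disks. This shows that there are no other large roots, and that the remaining roots are of bounded modulus.

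To sharpen $r$ to $O(1/n)$, we take $r=m_3/|n|$. The lower bound $m_2 r$ still dominates $m_1/|\lambda|$ if $m_3$ is chosen large. This yields (17), uniformly in $t$. Simplicity follows because each disk contains exactly one root of the analytic function $\Delta(\cdot,t)$ (up to the nonvanishing factor $e^{i\pi t}$ relating it to (16)), so that root has multiplicity one. A small technicality remains: (16) was obtained after dividing by $2e^{\pi b}$ and rewriting with $O(1/\lambda)$ terms. We must check that (16) is equivalent to (11) with the same roots, not merely approximately. This holds because $g$ is defined exactly as the difference of the two sides.

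\emph{Eigenfunctions (part (b)).} Take $\lambda=\lambda_{n,1}(t)=2n-t-ib+O(1/n)$. By (14), and since $\sin\pi\lambda=\sin\pi(-t-ib)+O(1/n)$, we get
\[
s_1(\pi,\lambda)=-e^{\pi b}\sin\pi(-t-ib)+O(1/n).
\]
This is bounded away from zero because $t+ib\notin\mathbb Z$ uniformly; here (2) is used again. So (12) applies. Using (15), the numerator equals
\[
e^{i\pi t}-e^{\pi b}\cos\pi(t+ib)+O(1/n).
\]
Note that $e^{\pi b}=e^{-i\pi(ib)}$. A direct Euler computation then gives the quotient $=-i+O(1/n)$ (sign to be checked). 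Substituting into (12) gives
\[
\Phi=e^{a(x)}\bigl(\cos\lambda x+i\sin\lambda x\bigr)\binom{1}{i}+O(1/n)=\binom{1}{i}e^{a(x)}e^{i\lambda x}+O(1/n).
\]
Finally, $e^{i\lambda x}=e^{i(2n-t-ib)x}+O(1/n)$ uniformly on $[0,\pi]$, and this exponent must match (18) after fixing the sign convention. The second family is handled the same way and gives $\binom{1}{-i}$.

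\emph{Main obstacle.} We expect the main difficulty to be the uniformity in $t$, especially near $t=\pm1$ and $t=0$. The plan handles it by relying only on the $t$-independent gap $|\operatorname{Re}b|$.
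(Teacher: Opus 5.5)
Your part (a) follows the paper's argument: Rouch\'{e} on circles of radius $m/|n|$ about $2n\pm(t+ib)$, with a $t$-uniform lower bound that comes from $\operatorname{Re}b\neq0$. You also rightly make explicit that $s_1(\pi,\lambda_{n,j}(t))$ stays away from $0$, since $|\sin\pi(t+ib)|\geq|\sinh(\pi\operatorname{Re}b)|$.

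\textbf{Part (b) has a genuine error.} As written it does not prove (18): its one real computation comes out with the wrong sign, and the mismatch is then deferred to a ``sign convention''. For $\lambda_{n,1}(t)$ the quotient in (12) is $+i$, not $-i$. With $z=\pi(t+ib)$ one has $e^{i\pi t}=e^{\pi b}e^{iz}$. So the numerator is $e^{\pi b}(e^{iz}-\cos z)+O(1/n)=ie^{\pi b}\sin z+O(1/n)$. Your denominator is correct: $-e^{\pi b}\sin\pi(-t-ib)+O(1/n)=e^{\pi b}\sin z+O(1/n)$. Hence
\[
\Phi_{n,1,t}=\mathbf{c}+i\mathbf{s}+O(1/n)=e^{a(x)}\binom{\cos\lambda x-i\sin\lambda x}{\sin\lambda x+i\cos\lambda x}+O(1/n)=\binom{1}{i}e^{a(x)}e^{-i\lambda x}+O(1/n).
\]
Since $-\lambda_{n,1}(t)=-2n+t+ib+O(1/n)$, this gives the first formula of (18) directly. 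The value $-i$ belongs to $\lambda_{n,2}(t)$, which is the case the paper writes out.

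Your displayed line is also inconsistent with your own quotient: $-i$ would give $\mathbf{c}-i\mathbf{s}=\binom{1}{-i}e^{a(x)}e^{i\lambda x}$, not $\binom{1}{i}e^{a(x)}e^{i\lambda x}$. No sign convention can repair your final expression $\binom{1}{i}e^{a(x)}e^{i(2n-t-ib)x}$, for three reasons:
\begin{itemize}
\item (12) determines $\Phi_{n,1,t}$ uniquely.
\item This expression differs from (18) by a quantity of order $1$, not $O(1/n)$.
\item $\binom{1}{i}e^{i\lambda x}$ solves $J\mathbf{y}'=-\lambda\mathbf{y}$ rather than $J\mathbf{y}'=\lambda\mathbf{y}$.
\end{itemize}

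\textbf{A smaller point on your counting step.} You add a global counting argument that the paper omits (it only applies Rouch\'{e} on the small circles). As sketched, it needs two adjustments.
\begin{itemize}
\item The lines $\operatorname{Re}\lambda=\pm(2N+1)$ can pass through unperturbed roots. For $t=1$ and $\operatorname{Im}b=0$, the root $2(N+1)-t-ib$ lies on $\operatorname{Re}\lambda=2N+1$. So the vertical sides must be placed depending on $t$, e.g.\ at horizontal distance at least $1/2$ from both classes $\pm(t-\operatorname{Im}b)$ modulo $2$.
\item On the horizontal sides you need the growth bound $g(\lambda)=O(e^{\pi|\operatorname{Im}\lambda|}/|\lambda|)$. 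A bound $|g|<m_1/|\lambda|$ as in (16) can only hold in a horizontal strip.
\end{itemize}
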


\begin{proof}
$(a)$ To prove (17), we show that there exists $m_{2}$ such that
\begin{equation}
\left\vert 2\cos\pi\lambda-2\cos\pi(2n\pm t\pm ib)\right\vert >\frac{m_{1}%
}{\left\vert \lambda\right\vert } \tag{19}%
\end{equation}
for all
\[
\lambda=2n\pm t\pm ib+\frac{m_{2}}{n}e^{i\alpha},\text{ }\alpha\in
\lbrack0,2\pi),
\]
where $m_{1}$ is defined in (16). Let us consider the case where $\pm$ is
replaced by $-;$ the other case is similar. Using the difference-to-product
trigonometric formula
\begin{align*}
&  \cos\pi(2n-t-ib)+\frac{m_{2}}{n}e^{i\alpha})-\cos\pi(2n-t-ib)\\
&  =-2\sin\left(  (2n-t-ib)\pi+\frac{m_{2}}{2n}e^{i\alpha}\right)  \sin
\frac{m_{2}}{2n}e^{i\alpha},
\end{align*}
together with (2), and the obvious inequalities
\[
\left\vert \sin\left(  (2n-t-ib)\pi+\frac{m_{2}}{2n}e^{i\alpha}\right)
\right\vert >m_{3}\left\vert \operatorname{Re}b\right\vert ,\text{ }\left\vert
\sin\frac{m_{2}}{2n}e^{i\alpha}\right\vert >\left\vert \frac{m_{2}}%
{3n}\right\vert ,
\]
we obtain (19). Therefore, the first formula in (17) follows from (16) and
Rouch\'{e}'s theorem. In the same way, we obtain the proof of the second
formula in (17).

$(b)$ To prove (18), we use (12), (14) and (15). It follows from (17), (14)
and (15) that%
\[
s_{1}(\pi,\lambda_{n,2}(t))=-e^{\pi b}\sin\pi(t+ib)+O(\frac{1}{n}),\text{
}c_{1}(\pi,\lambda_{n,2}(t))=e^{\pi b}\cos\pi(t+ib)+O(\frac{1}{n}),
\]%
\[
e^{i\pi t}-c_{1}(\pi,\lambda_{n,2}(t))=e^{\pi b}e^{i\pi(t+ib)}-e^{\pi b}%
\cos\pi(t+ib)=ie^{\pi b}\sin\pi(t+ib),
\]%
\[
\mathbf{c}(x,\lambda_{n,2}(t))=\left(
\begin{array}
[c]{c}%
e^{a(x)}\cos(2n+t+ib)x\\
e^{a(x)}\sin(2n+t+ib)x
\end{array}
\right)  +O(\frac{1}{n})
\]
and%
\[
\mathbf{s}(x,\lambda_{n,2}(t))=\left(
\begin{array}
[c]{c}%
-e^{a(x)}\sin(2n+t+ib)x\\
e^{a(x)}\cos(2n+t+ib)x
\end{array}
\right)  +O(\frac{1}{n}).
\]
Using these equalities in (12), we obtain the proof of the second formula of
(18). In the same way we prove the first formula in (18).
\end{proof}

Thus, for large values of $n,$ the eigenvalues $\lambda_{n,1}(t)$ and
$\lambda_{n,2}(t)$ of $L_{t}(Q)$ are asymptotically close, respectively, to
the eigenvalues $(2n-t-ib)$ and $(2n+t+ib)$ of the operator $L_{z}(O),$ where
$z=t+ib$ and $O$ is $2\times2$ zero operator. The corresponding eigenfunctions
of $L_{z}(O)$ are
\[
\left(
\begin{array}
[c]{c}%
1\\
i
\end{array}
\right)  e^{i(-2n+t+ib)x}\text{ }\And\text{ }\left(
\begin{array}
[c]{c}%
1\\
-i
\end{array}
\right)  e^{i(2n+t+ib)x}.
\]
It is clear that if (2) holds, that is, if $\operatorname{Re}b\neq0,$ then all
eigenvalues of $L_{z}(O)$ for $z=t+ib$ are simple and the distance between
neighboring eigenvalues in not less than $\min\left\{  2,2\left\vert
\operatorname{Re}b\right\vert \right\}  $ . Moreover, it can be easily
verified that the boundary condition (2) is strongly regular for
$t\notin\mathbb{Z}$ in the sense of [16]. This verification was carried out in
[24]. Therefore, if (2) holds, then the operator $L_{t}(Q)$ is strongly
regular and its root functions, for all $t\in(-1,1],$ forms a Riesz basis in
$L_{2}^{2}[0,\pi]$ according to [16].

Note that there are many papers on the basis properties of the root functions
of Dirac operators in $L_{2}^{2}[a,b],$ for $-\infty<a<b<\infty,$ under
various boundary conditions (see [2-4, 9-12, 15-17] and the references
therein). Here, we use only the fact that the root functions of the Dirac
operator generated by (1) with strongly regular boundary conditions form a
Riesz basis. Therefore, we do not discuss in detail the works devoted to Dirac
operators on a bounded interval.\ 

Since the matrix
\[
\overline{Q}=\left(
\begin{array}
[c]{cc}%
\overline{a_{1}(x)} & \overline{a_{3}(x)}\\
\overline{a_{2}(x)} & \overline{a_{4}(x)}%
\end{array}
\right)
\]
also satisfies condition (2), by Theorem 1, the eigenfunctions $\Phi
_{n,1,t}^{\ast}(x)$ and $\Phi_{n,2,t}^{\ast}(x)$ of the adjoint operator
$L_{t}(\overline{Q})$ satisfy the asymptotic formulas%
\begin{equation}
\Phi_{n,1,t}^{\ast}(x)=\left(
\begin{array}
[c]{c}%
1\\
i
\end{array}
\right)  e^{-\overline{a(x)}}e^{i(-2n+t-i\overline{b})x}+O(\frac{1}{n}),\text{
}\Phi_{n,2,t}^{\ast}(x)=\left(
\begin{array}
[c]{c}%
1\\
-i
\end{array}
\right)  e^{-\overline{a(x)}}e^{i(2n+t-i\overline{b})x}+O(\frac{1}{n})\text{
}. \tag{20}%
\end{equation}

\begin{remark}
One can easily verify that the numbers $2n-t-ib$ and $2n+t+ib$ are the
eigenvalues of the operator $L_{t}(Q_{b}),$ where $Q_{b}=\left(
\begin{array}
[c]{cc}%
0 & -b\\
b & 0
\end{array}
\right)  .$ The eigenfunctions of $L_{t}(Q_{b})$ corresponding to the
eigenvalues $(2n-t-ib)$ and $(2n+t+ib)$ are
\[
\left(
\begin{array}
[c]{c}%
1\\
i
\end{array}
\right)  e^{i(-2n+t)x}\text{ and }\left(
\begin{array}
[c]{c}%
1\\
-i
\end{array}
\right)  e^{i(2n+t)x}.
\]
Its adjoint operator $\left(  L_{t}(Q_{b})\right)  ^{\ast}$ is $L_{t}%
(\overline{Q_{b}}),$ where $\overline{Q_{b}}=\left(
\begin{array}
[c]{cc}%
0 & \overline{b}\\
-\overline{b} & 0
\end{array}
\right)  .$ The eigenvalues of $L_{t}(\overline{Q_{b}})$ are $(2n-t+i\overline
{b})$ and $(2n+t-i\overline{b}).$ The normalized eigenfunctions corresponding
to these eigenvalues are
\[
\varphi_{n,1,t}(x)=\frac{1}{\sqrt{2\pi}}\left(
\begin{array}
[c]{c}%
1\\
i
\end{array}
\right)  e^{i(-2n+t)x}\text{ and }\varphi_{n,2,t}(x)=\frac{1}{\sqrt{2\pi}%
}\left(
\begin{array}
[c]{c}%
1\\
-i
\end{array}
\right)  e^{i(2n+t)x}.
\]

\end{remark}

\section{Spectrality and Spectral Expansion}

To study spectrality, we use Definition 1; that is, estimate the projections
\[
e(\gamma(t))=\int_{\gamma(t)}\left(  L_{t}(Q)-\lambda I\right)  ^{-1}%
d\lambda,\text{ }%
\]
where $\gamma(t)$ is a closed curve lying in $\{\lambda\in\mathbb{C}%
:\mid\lambda\mid>M\}\cap\rho\left(  L_{t}\right)  $. By Theorem 1, there
exists $M$ such that the closed curve $\gamma(t)$ encloses only the
eigenvalues $\lambda_{n,j}(t)$ with $\mid n\mid\geq N,$ where $N$ is defined
in Theorem 1$(a).$ This mean that $\lambda_{n,j}(t)$ is a simple eigenvalues.
Let $\mathbb{D}(t)$ be a subset of the set
\[
\{(n,j):n\in\mathbb{Z},\mid n\mid\geq N,\text{ }j=1,2\}=\mathbb{Z}%
\times\{1,2\}
\]
such that $\lambda_{n,j}(t)$ lies inside $\gamma(t)$ for $\left(  n,j\right)
\in\mathbb{D}(t).$ Then we have
\begin{equation}
e(\gamma(t))f=\text{ }\sum_{\left(  n,j\right)  \in\mathbb{D}(t)}\frac
{1}{\alpha_{n,j}(t)}(f,\Phi_{n,j,t}^{\ast})\Phi_{n,j,t}, \tag{21}%
\end{equation}
where $\alpha_{n,j}(t)=(\Phi_{n,j,t},\Phi_{n,j,t}^{\ast}).$ Now, using this,
we prove the following.

\begin{theorem}
If (2) holds, then $L(Q)$ is an asymptotically spectral operator.
\end{theorem}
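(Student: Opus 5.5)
Using (21), we show that the projections $e(\gamma(t))$ are uniformly bounded in $t\in(-1,1]$ and in all contours $\gamma(t)$ lying in $\{|\lambda|>M\}\cap\rho(L_t)$. We compare the system $\{\Phi_{n,j,t}\}$ with the orthonormal basis $\{\varphi_{n,j,t}\}$ of Remark 1. Multiplication by the bounded, boundedly invertible function $e^{a(x)}e^{-bx}$ is $\pi$-periodic, since $a(\pi)=\pi b$. Transport by this multiplication reduces the question to showing that the normalized eigenfunctions form a Riesz basis with bounds independent of $t$, and that the biorthogonal coefficients $1/\alpha_{n,j}(t)$ are uniformly bounded.

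\textbf{Step 1: uniform lower bound for $|\alpha_{n,j}(t)|$.} Substitute (18) and (20) into $\alpha_{n,j}(t)=(\Phi_{n,j,t},\Phi^{\ast}_{n,j,t})$. In the inner product the factors $e^{a(x)}$ and $\overline{e^{-\overline{a(x)}}}=e^{-a(x)}$ cancel. The exponentials $e^{i(\pm2n+t+ib)x}$ and $\overline{e^{i(\pm 2n+t-i\overline b)x}}$ also cancel, because $\overline{-i\overline b}=ib$. The vector factor gives $|(1,\pm i)|^2=2$. Hence
\[
\alpha_{n,j}(t)=2\pi+O(1/n),
\]
uniformly in $t$. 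In particular $|\alpha_{n,j}(t)|>\pi$ for $|n|\geq N$, after enlarging $N$ if necessary.

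\textbf{Step 2: uniform Bessel inequalities.} We show that there is a constant $m_4$, independent of $t$ and of the index set $\mathbb{D}\subset\{|n|\geq N\}\times\{1,2\}$, such that
\[
\sum_{(n,j)\in\mathbb{D}}|(f,\Phi^{\ast}_{n,j,t})|^2\le m_4\|f\|^2
\quad\text{and}\quad
\Big\|\sum_{(n,j)\in\mathbb{D}}c_{n,j}\Phi_{n,j,t}\Big\|^2\le m_4\sum|c_{n,j}|^2 .
\]
Given these, (21) together with Cauchy--Schwarz yields $\|e(\gamma(t))f\|\le \pi^{-1}m_4\|f\|$, which proves the theorem.

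For the leading terms these inequalities hold because
\[
\left(\begin{array}{c}1\\ \pm i\end{array}\right)e^{a(x)}e^{i(\pm2n+t+ib)x}=\sqrt{2\pi}\,e^{a(x)-bx}\varphi_{n,j,t}(x),
\]
with $\varphi_{n,j,t}$ orthonormal and $e^{a(x)-bx}$ bounded above and below. The same holds for (20).

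\textbf{Step 3: the main obstacle, the remainders.} The $O(1/n)$ terms are only uniformly small pointwise, and $\sum 1/n^2<\infty$ gives a Hilbert--Schmidt-type bound for the perturbation. If the remainder is $r_{n,j,t}$ with $\|r_{n,j,t}\|\le m/|n|$, then
\[
\sum|(f,r_{n,j,t})|^2\le \|f\|^2\sum m^2/n^2,
\]
and similarly
\[
\Big\|\sum c_{n,j}r_{n,j,t}\Big\|\le \Big(\sum|c_{n,j}|^2\Big)^{1/2}\Big(\sum m^2/n^2\Big)^{1/2}.
\]
So the remainders contribute a bounded, uniform-in-$t$ amount.

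The delicate point is that the constants in (18) and (20) must be uniform in $t\in(-1,1]$ and in $x$. Theorem 1 asserts this, and it relies on condition (2) through the separation $|\sin\pi(2n-t-ib)|\geq m_3|\operatorname{Re}b|$ used in (19). That separation keeps $s_1(\pi,\lambda_{n,j}(t))$ away from zero in (12), including near $t=0,1$. I would verify this uniformity carefully, since it is exactly where (2) is essential.
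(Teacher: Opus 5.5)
Your proposal is correct and is essentially the paper's argument: substitute (18) and (20) into the projection formula (21), factor out $e^{a(x)-bx}$ to compare with the orthonormal system $\{\varphi_{n,j,t}\}$, and absorb the uniformly $O(1/n)$ remainders by square-summability, which is precisely the step the paper delegates to the proof of (2.5) in [20]. Your value $\alpha_{n,j}(t)=2\pi+O(1/n)$ is the correct normalization on $[0,\pi]$ (the paper's $2+O(1/n)$ is a harmless slip).
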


\begin{proof}
By (18) and (20), we have
\begin{equation}
\Phi_{n,j,t}=e^{a(x)-bx}d_{j}e^{i(\left(  -1\right)  ^{j}2n+t)x}+O(\frac{1}%
{n}),\text{ }\Phi_{n,j,t}^{\ast}=e^{-\overline{a(x)}+\overline{b}x}%
d_{j}e^{i(\left(  -1\right)  ^{j}2n+t)x}+O(\frac{1}{n}) \tag{22}%
\end{equation}
and $\alpha_{n,j}(t)=2+O(\frac{1}{n}),$ where $d_{j}=\left(
\begin{array}
[c]{c}%
1\\
\left(  -1\right)  ^{j-1}i
\end{array}
\right)  .$ Using (22) in (21), we obtain
\begin{equation}
e(\gamma(t))f=\frac{1}{2}e^{a(x)-bx}\sum_{\left(  n,j\right)  \in
\mathbb{D}(t)}\left(  e^{-a(x)+bx}f,d_{j}e^{i(\left(  -1\right)  ^{j}%
2n+t)x}+O(\frac{1}{n})\right)  \left(  d_{j}e^{i(\left(  -1\right)
^{j}2n+t)x}+O(\frac{1}{n})\right)  . \tag{23}%
\end{equation}
Now, using (23) and repeating the proof of (2.5) of [20], we conclude that
there exists a positive constant $m_{4}$ and $m_{5}$ independent of $t$,
$\mathbb{D}(t)$ and $f$ such that
\begin{equation}
\left\Vert e(\gamma(t))f\right\Vert ^{2}\leq m_{4}\parallel e^{-a(x)+bx}%
f\parallel^{2}\leq m_{5}\left\Vert f\right\Vert ^{2} \tag{24}%
\end{equation}
for all $f\in L_{2}^{2}(0,\pi)$, $t\in(-1,1]$ and $\gamma(t)\subset
\{\lambda\in\mathbb{C}:\mid\lambda\mid>M\}\cap\rho\left(  L_{t}\right)  .$
Therefore the proof of this theorem follows from Definition 1.
\end{proof}

Using (24) and repeating the proof of (2.18) of [20], we obtain the following result.

\begin{theorem}
If (2) is satisfied, then for each $f$ $\in L_{2}^{m}(-\infty,\infty)$ the
following equality holds
\[
f(x)=\frac{1}{2}\int\limits_{(-1,1]}%
{\displaystyle\sum\limits_{j=1,2;\text{ }\left\vert k\right\vert \leq N}}
a_{k,j}(t)\Psi_{k,j,t}(x)dt+\frac{1}{2}%
{\displaystyle\sum\limits_{j=1,2;\text{ }\left\vert k\right\vert >N_{{}}}}
\int\limits_{(-1,1]}a_{k,j}(t)\Psi_{k,j,t}(x)dt,
\]
where $a_{k,j}(t)=(f,X_{k,j,t}),$ $\Psi_{k,j,t}$ is the normalized
eigenfunction corresponding to $\lambda_{k,j}(t),$ and $\left\{
X_{k,j,t}:k\in\mathbb{Z},\text{ }j=1,2\right\}  $ is the biortogonal system to
$\left\{  \Psi_{k,j,t}:k\in\mathbb{Z},\text{ }j=1,2\right\}  .$ The series in
this formula converges in the norm of $L_{2}^{2}(a,b)$ for every
$a,b\in\mathbb{R}.$
\end{theorem}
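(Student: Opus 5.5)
The plan is the standard Gelfand transform (Floquet–Bloch) route. First I will take $f$ with compact support, since such functions are dense. For these I define the Gelfand transform
\[
f_t(x)=\sum_{k\in\mathbb{Z}}f(x+\pi k)e^{-i\pi kt},\qquad x\in[0,\pi],\ t\in(-1,1].
\]
The map $f\mapsto f_t$ is, up to a constant factor, an isometry from $L_{2}^{2}(-\infty,\infty)$ onto $L_{2}((-1,1];L_{2}^{2}[0,\pi])$. It has the inversion formula $f(x)=\frac{1}{2}\int_{(-1,1]}f_t(x)\,dt$, where $f_t$ is extended to $\mathbb{R}$ by the quasi-periodicity rule $f_t(x+\pi)=e^{i\pi t}f_t(x)$. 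This is where the factor $\frac12$ comes from.

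Next, for each fixed $t$, I expand $f_t$ in the root functions of $L_t(Q)$. By the strong regularity of the boundary condition (3) for $t\notin\mathbb{Z}$, noted after Theorem 1, these form a Riesz basis. For $|k|\geq N$ all eigenvalues are simple by Theorem 1(a), so
\[
f_t=\sum_{|k|\le N,\,j}(f_t,X_{k,j,t})\Psi_{k,j,t}+\sum_{|k|>N,\,j}(f_t,X_{k,j,t})\Psi_{k,j,t}.
\]
Extending $\Psi_{k,j,t}$ quasi-periodically to $\mathbb{R}$ and unfolding the sum defining $f_t$, one gets $(f_t,X_{k,j,t})=(f,X_{k,j,t})=a_{k,j}(t)$, with the inner product taken over $\mathbb{R}$ and $X_{k,j,t}$ extended in the same way. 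The finitely many indices $|k|\le N$ need care at the exceptional $t$ where eigenvalues collide. I will treat that part as the single term $f_t-\sum_{|k|>N}$, which is the Riesz projection onto the first $4N+2$ root vectors. This term is integrable in $t$, so writing it inside one integral in the statement is legitimate.

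The key step is exchanging $\int_{(-1,1]}$ with the infinite sum over $|k|>N$ and proving convergence in $L_{2}^{2}(a,b)$. Here I use (24). For any finite set $\mathbb{D}(t)$ of indices with $|k|>N$, the partial sum $\sum_{(k,j)\in\mathbb{D}(t)}a_{k,j}(t)\Psi_{k,j,t}$ equals $e(\gamma(t))f_t$ for a suitable contour $\gamma(t)$ in $\{|\lambda|>M\}$. Consequently
\[
\Big\|\sum_{(k,j)\in \mathbb{D}}a_{k,j}(t)\Psi_{k,j,t}\Big\|^2\le m_5\Big\|\sum_{(k,j)\in\mathbb{D}}a_{k,j}(t)\Psi_{k,j,t}\Big\|^2_{\text{ref}}
\]
holds uniformly in $t$. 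More precisely, (24) applied to tails gives uniform (in $t$) control $\|\sum_{|k|>K}a_{k,j}(t)\Psi_{k,j,t}\|_{L_2^2[0,\pi]}\le \sqrt{m_5}\,\|P_K f_t\|$. Here $P_K f_t$ is the tail of the expansion, which tends to $0$ for each $t$ and is dominated by $m\|f_t\|$. By dominated convergence in $t$ and Minkowski's integral inequality,
\[
\Big\|\int_{(-1,1]}\sum_{|k|>K}a_{k,j}(t)\Psi_{k,j,t}\,dt\Big\|_{L_2^2(a,b)}\to0 .
\]
This uses the fact that $[a,b]$ is covered by finitely many translates of $[0,\pi]$, and quasi-periodic extension only multiplies by $e^{i\pi t p}$, which has modulus one. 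This proves norm convergence of the second series and justifies integrating termwise.

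Finally, I pass from compactly supported $f$ to general $f\in L_{2}^{2}(-\infty,\infty)$ by density. Both sides depend continuously on $f$ in $L_2^2(a,b)$ because of the isometry of the Gelfand transform and the uniform bound (24). The main obstacle is measurability and integrability in $t$ of the finitely many low terms near the exceptional $t$ where eigenvalues coalesce or the normalization of $\Psi$ fails. I will avoid this by never splitting those terms individually, keeping them as one Riesz projection expressed through a contour integral of the resolvent, which is continuous in $t$. This is presumably the content of the proof of (2.18) in [20].
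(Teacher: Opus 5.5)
Your proposal is correct and follows essentially the route the paper takes; its proof simply says to combine the uniform bound (24) with the Gelfand-transform argument for (2.18) of [20]. That route is: the inversion $f=\frac12\int_{(-1,1]}f_t\,dt$, the expansion of $f_t$ in the Riesz basis of root functions of $L_t(Q)$, and the interchange of sum and integral via the uniform domination $\|\sum_{|k|>K}a_{k,j}(t)\Psi_{k,j,t}\|\le\sqrt{m_5}\,\|f_t\|$ plus pointwise convergence and dominated convergence in $t$. One cosmetic fix: your displayed inequality with the undefined ``ref'' norm should read $\le m_5\|f_t\|^2$, which is exactly what (24) supplies.
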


Now, we determine a condition on the potential $Q$ under which $L_{t}(Q)$ is a
spectral operator. To this end, we look for a condition on $Q$ that guarantees
the simplicity of the eigenvalues of $L_{t}(Q)$ for all $t\in(-1,1].$ For this
purpose, we consider the operator $L_{t}(Q)$ as a perturbation of the operator
$L_{t}(Q_{b})$ by $Q-Q_{b}.$ The eigenvalues of $L_{t}(Q_{b})$ are simple (see
Remark 1) and lie on the horizontal lines
\begin{equation}
H(\pm b)=\left\{  (x,y)\in\mathbb{R}^{2}:y=\operatorname{Re}b\right\}  \text{
and }H(\pm b)=\left\{  (x,y)\in\mathbb{R}^{2}:y=-\operatorname{Re}b\right\}  .
\tag{25}%
\end{equation}
The distance between neighboring eigenvalues of $L_{t}(Q_{b})$ is either $2$
(if they lie on the same line) or not less than $2\left\vert \operatorname{Re}%
b\right\vert $ (if they lie on the different line). Therefore, if the norm of
$Q-Q_{b}$ is sufficiently small, then the eigenvalues of $L_{t}(Q)$ remain
simple for all $t\in(-1,1].$ To determine this bound, we use the following
equality
\begin{equation}
(\lambda-(2n+\left(  -1\right)  ^{j}(t+ib))(\Psi_{\lambda},\varphi
_{n,j,t})=(\Psi_{\lambda},(Q-Q_{b})\varphi_{n,j,t}), \tag{26}%
\end{equation}
where $\Psi_{\lambda}$ is the normalized eigenfunction of $L_{t}(Q)$
corresponding to the eigenvalue $\lambda$ and $\varphi_{n,j,t}$ is defined in
Remark 1. \ Equality (26) follows from the relation $L_{t}(Q)\Psi_{\lambda}=$
$\lambda\Psi_{\lambda},$ by taking the inner product with $\varphi_{n,j,t}$
and using
\[
\left(  L_{t}(Q_{b})\right)  ^{\ast}\varphi_{n,j,t}=(2n+\left(  -1\right)
^{j}(t-i\overline{b}))\varphi_{n,j,t}.
\]
The following lemmas play a critical role in establishing the simplicity of
the eigenvalues of $L_{t}(Q)$ for all $t\in(-1,1].$

\begin{lemma}
If $\left\vert \operatorname{Re}b\right\vert \geq1$ and
\begin{equation}
\int\limits_{0}^{\pi}\left\vert a_{1}(x)\pm\left(  a_{2}(x)+b\right)
i\right\vert ^{2}+\left\vert a_{3}(x)-b\pm\imath a_{4}(x)\right\vert
^{2}dx\leq\frac{4}{\pi}, \tag{27}%
\end{equation}
then the circles
\[
C(1,2k\pm t\pm ib)=\left\{  \lambda\in\mathbb{C}:\left\vert \lambda-(2k\pm
t\pm ib)\right\vert =1\right\}
\]
for all $k\in\mathbb{Z}$ lie in the resolvent set of $L_{t}(Q)$.
\end{lemma}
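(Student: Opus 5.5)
Suppose, for contradiction, that some $\lambda$ with $|\lambda-(2k+(-1)^{j}(t+ib))|=1$ (for some $k\in\mathbb{Z}$, $j\in\{1,2\}$) is an eigenvalue of $L_{t}(Q)$, and let $\Psi_{\lambda}$ be a normalized eigenfunction. The plan is to expand $\Psi_{\lambda}$ in the orthonormal basis $\{\varphi_{n,j,t}:n\in\mathbb{Z},j=1,2\}$ of Remark 1 and use (26) for every $(n,j)$. Squaring and summing gives
\[
\sum_{n,j}\left\vert \lambda-(2n+(-1)^{j}(t+ib))\right\vert ^{2}\left\vert (\Psi_{\lambda},\varphi_{n,j,t})\right\vert ^{2}=\sum_{n,j}\left\vert ((Q-Q_{b})^{\ast}\Psi_{\lambda},\varphi_{n,j,t})\right\vert ^{2}.
\]
The right-hand side is $\Vert (Q-Q_{b})^{\ast}\Psi_{\lambda}\Vert^{2}$ by Parseval. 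Since $\sum_{n,j}|(\Psi_{\lambda},\varphi_{n,j,t})|^{2}=1$, the left-hand side is at least $\min_{n,j}|\lambda-(2n+(-1)^{j}(t+ib))|^{2}$.

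The first key step is geometric: show that every $\lambda$ on the circle $C(1,2k\pm t\pm ib)$ is at distance at least $1$ from every eigenvalue $2n\pm(t+ib)$ of $L_{t}(Q_{b})$. On the same horizontal line (25) the eigenvalues are spaced by $2$, so a point at distance $1$ from one of them is at distance at least $1$ from the others on that line. The other line is at vertical distance $2|\operatorname{Re}b|\geq2$, which is where the hypothesis $|\operatorname{Re}b|\geq1$ enters, so the circle stays at distance at least $1$ from those eigenvalues too. Hence the left-hand side is at least $1$, with the inequality strict unless mass concentrates exactly on the nearest points.

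The second step, which is the main obstacle, is to bound the right-hand side using (27). The pointwise Parseval bound $\Vert (Q-Q_{b})^{\ast}\Psi_{\lambda}\Vert\leq\sup|Q-Q_b|$ is useless, since (27) controls only $L_{2}$ norms. Instead I would use the explicit structure of $\varphi_{n,j,t}$. Write $(\Psi_{\lambda},(Q-Q_{b})\varphi_{n,j,t})=\frac{1}{\sqrt{2\pi}}\int_{0}^{\pi}\langle \Psi_{\lambda}(x),(Q-Q_{b})(x)d_{j}\rangle e^{-i((-1)^{j}2n+t)x}dx$ (up to conjugation conventions). The vector $(Q-Q_{b})d_{j}$ has components $a_{1}\pm i(a_{2}+b)$ and $a_{3}-b\pm ia_{4}$, which are exactly the quantities appearing in (27). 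By Cauchy--Schwarz each inner product is at most $\frac{1}{\sqrt{2\pi}}\Vert\Psi_{\lambda}\Vert_{L_2}\big(\int_0^\pi |(Q-Q_b)d_j|^2dx\big)^{1/2}$, i.e. at most $\frac{1}{\sqrt{2\pi}}\cdot\frac{2}{\sqrt{\pi}}$ by (27).

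This pointwise-in-$n$ bound is not summable, so I would instead weight the equations. Dividing (26) by $\lambda-(2n+(-1)^j(t+ib))$ and summing the squares gives
\[
1=\sum|(\Psi_\lambda,\varphi_{n,j,t})|^2\leq \frac{2}{\pi^{2}}\sum_{n,j}\frac{1}{|\lambda-(2n+(-1)^{j}(t+ib))|^{2}}.
\]
It remains to show that this lattice sum, for $\lambda$ on the circle, is strictly less than $\pi^{2}/2$; this gives the contradiction. Along a line with spacing $2$, a point at horizontal offset $s$ and vertical offset $h$ contributes $\sum_n 1/((2n-s)^2+h^2)=\frac{\pi}{4h}\frac{\sinh \pi h}{\cosh\pi h-\cos\pi s}$-type closed forms, which can be bounded using $|\operatorname{Re} b|\geq1$ and the distance-$1$ constraint. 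Verifying that the constant $4/\pi$ in (27) makes the total strictly below $\pi^2/2$, including equality cases, is the delicate computation I expect to be the hardest part. If it falls short, I would fall back to separating the nearest term and using the distance $\geq1$ for it, with $\geq 2$-type gaps for the rest.
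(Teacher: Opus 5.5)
Your final route is the paper's own. Cauchy--Schwarz on the right side of (26), together with (27), gives $|(\Psi_\lambda,\varphi_{n,j,t})|^2\le\frac{2}{\pi^2}|\lambda-(2n+(-1)^j(t+ib))|^{-2}$. Parseval then gives $1\le\frac{2}{\pi^2}\Sigma(\lambda)$, where $\Sigma(\lambda)=\sum_{n,j}|\lambda-(2n+(-1)^j(t+ib))|^{-2}$. Everything therefore reduces to the lattice-sum bound (29), $\Sigma(\lambda)<\pi^2/2$. (You were right to abandon the route through $\min|\lambda-\mu|$ and $\Vert (Q-Q_b)^{\ast}\Psi_\lambda\Vert$ in your first two paragraphs.)

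\textbf{The gap.} You stop precisely at (29), and (29) is the actual content of the proof. You leave it unverified and allow that it might fall short. Your closed form is also misnormalized; the correct one is $\sum_{m\in\mathbb{Z}}((2m+v)^2+h^2)^{-1}=\frac{\pi}{2h}\cdot\frac{\sinh\pi h}{\cosh\pi h-\cos\pi v}$. As written, the lemma is not proved.

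\textbf{How (29) closes.} It does not fall short; this is (31)--(34) of the paper. Write $\lambda=2k+t+ib+x+iy$ with $x^2+y^2=1$.

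For the eigenvalues on the circle's own line, $|\lambda-(2n+t+ib)|^2=4s^2+4sx+1$ with $s=k-n$. Pairing $s$ with $-s$ gives $\frac{2(4s^2+1)}{(4s^2+1)^2-16s^2x^2}$, which increases with $x^2$. So this half of $\Sigma$ is at most $1+\sum_{s\ge1}\big((2s-1)^{-2}+(2s+1)^{-2}\big)=\pi^2/4$. The equality case you were worried about really occurs. At $x=\pm1$ the point is at distance exactly $1$ from two eigenvalues of its own line, and this half equals $\pi^2/4$. So strictness has to come from the other line.

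On the other line, the imaginary part of $\lambda-(2n-t-ib)$ is $2\operatorname{Re}b+y$, of modulus at least $2|\operatorname{Re}b|-1\ge1$. This is the only place $|\operatorname{Re}b|\ge1$ is used. The real part is $2m+v$, with $v$ fixed and $m$ running over $\mathbb{Z}$ as $n$ does. Hence this half is at most $\sum_{m}((2m+v)^2+1)^{-1}\le\frac{\pi}{2}\coth\frac{\pi}{2}\approx1.71<\pi^2/4$. (The paper instead uses $|2\operatorname{Re}b+y|^2\ge1-v^2$ with $|v|<1$ and repeats the first computation.) Thus $\Sigma(\lambda)<\pi^2/2$, and Parseval yields $1<1$. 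The circles about $2k-t-ib$ are handled symmetrically.

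\textbf{On your ``up to conjugation conventions''.} Computed correctly, the right side of (26) is $((Q-Q_b)\Psi_\lambda,\varphi_{n,j,t})=(\Psi_\lambda,(Q-Q_b)^{\ast}\varphi_{n,j,t})$. The vector $(Q-Q_b)^{\ast}d_j$ has components $\overline{a_1}\pm i(\overline{a_3}-\overline{b})$ and $\overline{a_2}+\overline{b}\pm i\overline{a_4}$, so it is in general not the vector controlled by (27). The paper writes (26) the same way you do.

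To use (27) verbatim, run the argument for the adjoint $L_t(\overline{Q})$ at $\overline{\lambda}$, regarded as $L_t(\overline{Q_b})$ perturbed by $(Q-Q_b)^{\ast}$. The comparison basis is still $\{\varphi_{n,j,t}\}$, and the right side becomes $(\Psi,(Q-Q_b)\varphi_{n,j,t})$. All the distances, hence $\Sigma$, are unchanged.
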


\begin{proof}
Assume that there exists $\lambda\in C(1,2k+t+ib)$ which is an eigenvalue of
$L_{t}(Q)$. Then, by (26), we have
\begin{equation}
\left\vert \left(  \Psi_{\lambda},\varphi_{n,j,t}\right)  \right\vert ^{2}%
\leq\frac{\left\Vert (Q-Q_{b})\varphi_{n,j,t}\right\Vert ^{2}}{\left\vert
\lambda-(2n+\left(  -1\right)  ^{j}(t+ib))\right\vert ^{2}}. \tag{28}%
\end{equation}
Using the definitions of $Q,$ $Q_{b}$ and $\varphi_{n,j,t}$ together with
condition (27), we obtain
\[
\left\Vert (Q-Q_{b})\varphi_{n,j,t}\right\Vert ^{2}\leq\frac{2}{\pi^{2}},
\]
for all $n,j$ and $t.$ Therefore, if
\begin{equation}
\sum_{n\in\mathbb{Z}}\frac{1}{\left\vert \lambda-(2n+t+ib)\right\vert ^{2}%
}+\sum_{n\in\mathbb{Z}}\frac{1}{\left\vert \lambda-(2n-t-ib)\right\vert ^{2}%
}<\frac{1}{2}\pi^{2}, \tag{29}%
\end{equation}
then we obtain the following contradiction:
\begin{equation}
1=\left\Vert \Psi_{\lambda}\right\Vert ^{2}=\sum_{j=1,2;n\in\mathbb{Z}%
}\left\vert \left(  \Psi_{\lambda},\varphi_{n,j,t}\right)  \right\vert ^{2}<1,
\tag{30}%
\end{equation}
since $\left\{  \varphi_{n,j,t}:j=1,2;\text{ }n\in\mathbb{Z}\right\}  $ is an
orthonormal basis.

Thus, it remains to prove (29). First, consider the first summation in (29).
By assumption $\lambda=2k+t+ib+x+iy,$ where $x\in\lbrack-1,1]$ and $y=\pm
\sqrt{1-x^{2}}.$ Then
\begin{equation}
\left\vert \lambda-(2n+t+ib)\right\vert ^{2}=\left(  2s+x\right)  ^{2}%
+1-x^{2}=4s^{2}+4sx+1, \tag{31}%
\end{equation}
where $s=k-n$. Therefore, we have
\[
\sum_{n\in\mathbb{Z}}\frac{1}{\left\vert \lambda-(2n+t+ib)\right\vert ^{2}%
}=\sum_{s\in\mathbb{Z}}\frac{1}{4s^{2}+4sx+1}=1+
\]%
\[
\sum_{s\in\mathbb{N}}\frac{1}{4s^{2}+4sx+1}+\sum_{s\in\mathbb{N}}\frac
{1}{4s^{2}-4sx+1}=1+\sum_{s\in\mathbb{N}}\frac{2(4s^{2}+1)}{(4s^{2}%
+1)^{2}-16s^{2}x^{2}}.
\]
Since the function $(4s^{2}+1)^{2}-16s^{2}x^{2}$ on the interval $[-1,1]$
attains its minimum value at the points $x=1$ or $x=-1,$ in the both cases,
using the well-known equality
\[
\sum_{n=1}^{\infty}\frac{1}{(2n-1)^{2}}=\frac{1}{8}\pi^{2},
\]
we obtain
\begin{equation}
\sum_{n\in\mathbb{Z}}\frac{1}{\left\vert \lambda-(2n+t+ib)\right\vert ^{2}%
}\leq1+\sum_{n=1}^{\infty}\frac{1}{(2n+1)^{2}}+\sum_{n=1}^{\infty}\frac
{1}{(2n-1)^{2}}=\frac{1}{4}\pi^{2}. \tag{32}%
\end{equation}

Now, consider the second summation in (29). Since%
\[
\lambda-\left(  2n-t-ib\right)  =2k+t+ib+x+iy-\left(  2n-t-ib\right)
=2(k-n)+2t+x+2ib+iy,
\]
where $b=\alpha+i\beta,$ there exist $m\in\mathbb{Z}$ and $v\in(-1,1)$ such
that%
\[
\lambda-\left(  2n-t-ib\right)  =2m+v+\imath(2\alpha+y).
\]
Here, $\left\vert 2\alpha+y\right\vert \geq\sqrt{1-v^{2}},$ since $\left\vert
\alpha\right\vert \geq1.$ Therefore,
\begin{equation}
\left\vert \lambda-(2n-t-ib)\right\vert ^{2}=\left(  2m+v\right)  ^{2}%
+1-v^{2}=4m^{2}+4mv+1. \tag{33}%
\end{equation}
Now, instead of (31), we use (33), where $s$ and $x\in\lbrack-1,1]$ are
replaced by $m$ and $v\in(-1,1).$ Repeating the proof of (32), we obtain
\begin{equation}
\sum_{n\in\mathbb{Z}}\frac{1}{\left\vert \lambda-(2n-t-ib)\right\vert ^{2}%
}<\frac{1}{4}\pi^{2}. \tag{34}%
\end{equation}
Thus, (29) follows from(32) and (34). Hence, if the point $\lambda$ from the
circle $C(1,2k+t+ib)$ is an eigenvalue of $L_{t}(Q),$ then we arrive at the
contradiction (30). This means that the circle lies in the resolvent set of
$L_{t}(Q).$ In the same way we prove that the circle $C(1,2k-t-ib)$ also lies
in the resolvent set of $L_{t}(Q).$ The lemma is proved.
\end{proof}

The following lemma, which similar to Lemma 1, can be proved in a similar way
for the case $\left\vert \operatorname{Re}b\right\vert <1$.

\begin{lemma}
If $0<\left\vert \operatorname{Re}b\right\vert <1$ and
\begin{equation}
\int\limits_{0}^{\pi}\left\vert a_{1}(x)\pm\left(  a_{2}(x)+b\right)
i\right\vert ^{2}+\left\vert a_{3}(x)-b\pm\imath a_{4}(x)\right\vert ^{2}%
\leq2\pi\left(  \frac{2}{\left(  \operatorname{Re}b\right)  ^{2}}+\frac{1}%
{3}\pi^{2}\right)  ^{-1}, \tag{35}%
\end{equation}
then the circles
\[
C(\left\vert \operatorname{Re}b\right\vert ,2k\pm t\pm ib)=\left\{  \lambda
\in\mathbb{C}:\left\vert \lambda-(2k\pm t\pm ib)\right\vert =\left\vert
\operatorname{Re}b\right\vert \right\}
\]
for all $k\in\mathbb{Z}$ lie in the resolvent set of $L_{t}(Q)$.
\end{lemma}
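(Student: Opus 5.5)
We follow the scheme of the proof of Lemma 1, with the radius $1$ replaced by $\rho:=\left\vert \operatorname{Re}b\right\vert \in(0,1)$. Suppose some $\lambda\in C(\rho,2k+t+ib)$ is an eigenvalue of $L_{t}(Q)$ with normalized eigenfunction $\Psi_{\lambda}$; the case of $C(\rho,2k-t-ib)$ is symmetric. By (26) we again get (28). Since $\varphi_{n,j,t}$ has modulus $(2\pi)^{-1/2}$ componentwise, the same computation as in Lemma 1 gives
\[
\left\Vert (Q-Q_{b})\varphi_{n,j,t}\right\Vert ^{2}\leq\frac{1}{2\pi}\int_{0}^{\pi}\left\vert a_{1}\pm(a_{2}+b)i\right\vert ^{2}+\left\vert a_{3}-b\pm ia_{4}\right\vert ^{2}dx\leq\Big(\frac{2}{\rho^{2}}+\frac{\pi^{2}}{3}\Big)^{-1},
\]
using (35). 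Hence, by Parseval's identity for the orthonormal basis $\{\varphi_{n,j,t}\}$, we obtain the contradiction (30) as soon as
\[
S(\lambda):=\sum_{n\in\mathbb{Z}}\frac{1}{\left\vert \lambda-(2n+t+ib)\right\vert ^{2}}+\sum_{n\in\mathbb{Z}}\frac{1}{\left\vert \lambda-(2n-t-ib)\right\vert ^{2}}<\frac{2}{\rho^{2}}+\frac{\pi^{2}}{3}.
\]
So everything reduces to this analogue of (29), and we aim to bound the two sums separately by $\frac{1}{\rho^{2}}+\frac{\pi^{2}}{4}$ and $\frac{1}{\rho^{2}}+\frac{\pi^{2}}{12}$, with at least one inequality strict.

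For the first sum, write $\lambda=2k+t+ib+\rho e^{i\theta}$, so $\left\vert \lambda-(2n+t+ib)\right\vert ^{2}=4s^{2}+4s\rho\cos\theta+\rho^{2}$ with $s=k-n$, exactly as in (31).
\begin{itemize}
\item The term $s=0$ contributes $1/\rho^{2}$.
\item Pairing $s$ with $-s$ as in the proof of (32), each pair is maximized at $\cos\theta=\pm1$. The pair is then bounded by $(2s-\rho)^{-2}+(2s+\rho)^{-2}$.
\item Since $\rho<1$, $\sum_{s\geq1}\big[(2s-\rho)^{-2}+(2s+\rho)^{-2}\big]<\sum_{s\geq1}\big[(2s-1)^{-2}+(2s+1)^{-2}\big]=\frac{\pi^{2}}{4}-1<\frac{\pi^{2}}{4}$.
\end{itemize}
Thus the first sum is strictly less than $\frac{1}{\rho^{2}}+\frac{\pi^{2}}{4}$.

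For the second sum, write $b=\alpha+i\beta$ with $\left\vert \alpha\right\vert =\rho$. As in Lemma 1,
\[
\lambda-(2n-t-ib)=2m+v+i(2\alpha+\rho\sin\theta),\qquad m\in\mathbb{Z},\ v\in(-1,1].
\]
Because $\left\vert 2\alpha+\rho\sin\theta\right\vert \geq2\rho-\rho=\rho$, every term satisfies $\left\vert \lambda-(2n-t-ib)\right\vert ^{2}\geq(2m+v)^{2}+\rho^{2}$.
\begin{itemize}
\item The term with $m=0$ gives at most $1/\rho^{2}$.
\item For the remaining terms we combine the real distances $2\pm v,\,4\pm v,\dots$ with the uniform vertical gap $\rho$. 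Pairing $m$ with $-m$ and maximizing over $v$ as above, we try to show the tail is at most $\frac{\pi^{2}}{12}$.
\end{itemize}

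The main obstacle is this last tail estimate. A crude bound that drops the imaginary part gives only about $\frac{\pi^{2}}{6}$, and then the total $\frac{2}{\rho^{2}}+\frac{5\pi^{2}}{12}$ overshoots the target. If the direct estimate fails, we would first try two rebalancings. One is to use the exact identity $\sum_{s\in\mathbb{Z}}(2s-\rho)^{-2}=\frac{\pi^{2}}{4\sin^{2}(\pi\rho/2)}$ to tighten the first sum and move the slack to the second. The other is to exploit that $\left\vert v\right\vert$ and the vertical offset cannot both be extremal simultaneously, as in (33) where $\left\vert 2\alpha+y\right\vert \geq\sqrt{1-v^{2}}$ was used. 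Once $S(\lambda)<\frac{2}{\rho^{2}}+\frac{\pi^{2}}{3}$ is established, the contradiction (30) shows that the circles lie in $\rho(L_{t}(Q))$.
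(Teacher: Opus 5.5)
Your reduction matches the paper's. By (35), $\Vert (Q-Q_b)\varphi_{n,j,t}\Vert^2\le(\frac{2}{\rho^2}+\frac{\pi^2}{3})^{-1}$, so everything hinges on $S(\lambda)<\frac{2}{\rho^2}+\frac{\pi^2}{3}$. Your bound $\frac{1}{\rho^2}+\frac{\pi^2}{4}-1$ for the first sum is also correct, since the pair $(2s-\rho)^{-2}+(2s+\rho)^{-2}$ increases in $\rho$; it is even slightly sharper than the paper's $\frac{1}{\rho^2}+\frac{\pi^2}{6}$.

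\textbf{The gap.} The proof is not finished. The second sum is left as an open ``main obstacle'', and the route you sketch for it cannot work. You bound the $m=0$ term by $1/\rho^2$ and then want the tail $\sum_{m\neq0}((2m+v)^2+c^2)^{-1}$, with $c=|2\alpha+\rho\sin\theta|\in[\rho,3\rho]$, to be at most $\frac{\pi^2}{12}$ after maximizing over $v$ separately. But $v$ is the residue of $2t-2\beta+\rho\cos\theta$ modulo $2$ and $t$ is free, so $v=1$ is admissible. Then the tail tends to $\sum_{m\neq0}(2m+1)^{-2}=\frac{\pi^2}{4}-1>\frac{\pi^2}{12}$ as $\rho\to0$.

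The value $\frac{1}{\rho^2}+\frac{\pi^2}{12}$ is in fact a true bound for the whole second sum. That is only because a large tail forces $|v|$ near $1$, which makes the $m=0$ term small. Proving it needs a joint argument in $v$, for instance the closed form $\sum_m((2m+v)^2+c^2)^{-1}=\frac{\pi}{2c}\,\frac{\sinh\pi c}{\cosh\pi c-\cos\pi v}$, which is maximal at $v=0$. You do not supply such an argument.

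\textbf{The repair.} It is already within your computations. Your claim that the crude estimate overshoots comes from dropping the $-1$ you had earned in the first sum, and from overestimating the crude tail. Pair $m$ with $-m$ exactly as in the first sum: $(2m+v)^{-2}+(2m-v)^{-2}\le(2m-1)^{-2}+(2m+1)^{-2}$ for all $|v|\le1$. Discarding $c^2$ in the tail and bounding the $m=0$ term by $1/\rho^2$, the second sum is at most $\frac{1}{\rho^2}+\frac{\pi^2}{4}-1$ for every admissible $v$. Hence
$S(\lambda)<\frac{2}{\rho^2}+\frac{\pi^2}{2}-2<\frac{2}{\rho^2}+\frac{\pi^2}{3}$, because $\pi^2<12$. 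Even your pessimistic tail value $\frac{\pi^2}{6}$ would have sufficed, since $\frac{5\pi^2}{12}-1<\frac{\pi^2}{3}$ for the same reason.

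This is the paper's balanced estimate (36), where each of the two sums is bounded by $\frac{1}{\rho^2}+\frac{\pi^2}{6}$ (note $\frac{\pi^2}{4}-1<\frac{\pi^2}{6}$). It replaces your unbalanced split $\frac{\pi^2}{4}$ versus $\frac{\pi^2}{12}$. With it in place, the contradiction (30) completes the proof as you intended.
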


\begin{proof}
It follows from the argument used in Lemma 1 that it is enough to show that if
$\lambda\in C(\left\vert \operatorname{Re}b\right\vert ,2k\pm t\pm ib),$ then
\begin{equation}
\sum_{n\in\mathbb{Z}}\frac{1}{\left\vert \lambda-(2n+\left(  -1\right)
^{j}(t+ib)\right\vert ^{2}}<\frac{1}{\left(  \operatorname{Re}b\right)  ^{2}%
}+\frac{1}{6}\pi^{2}. \tag{36}%
\end{equation}
for $j=1,2.$ First consider the case $\lambda\in C(\left\vert
\operatorname{Re}b\right\vert ,2k+t+ib)$ and $j=2.$ In this case, we have
$\lambda=2k+t+ib+x+iy$ and $\lambda-(2n+(t+ib)=2s+x+iy$, where
\[
s=k-n,\text{ }x\in\lbrack-\left\vert \operatorname{Re}b\right\vert ,\left\vert
\operatorname{Re}b\right\vert ],\text{ }y=\pm\sqrt{\left(  \operatorname{Re}%
b\right)  ^{2}-x^{2}}.
\]
Therefore, we have
\begin{equation}
\left\vert \lambda-(2n+t+ib)\right\vert ^{2}=\left(  2s+x\right)  ^{2}+\left(
\operatorname{Re}b\right)  ^{2}-x^{2}=4s^{2}+4sx+\left(  \operatorname{Re}%
b\right)  ^{2} \tag{37}%
\end{equation}
and
\[
\sum_{n\in\mathbb{Z}}\frac{1}{\left\vert \lambda-(2n+t+ib)\right\vert ^{2}%
}=\frac{1}{\left(  \operatorname{Re}b\right)  ^{2}}+\sum_{s\in\mathbb{N}}%
\frac{1}{4s^{2}+4sx+\left(  \operatorname{Re}b\right)  ^{2}}+
\]%
\[
\sum_{s\in\mathbb{N}}\frac{1}{4s^{2}-4sx+\left(  \operatorname{Re}b\right)
^{2}}=\frac{1}{\left(  \operatorname{Re}b\right)  ^{2}}+\sum_{s\in\mathbb{N}%
}\frac{2(4s^{2}+\left(  \operatorname{Re}b\right)  ^{2})}{(4s^{2}+\left(
\operatorname{Re}b\right)  ^{2})^{2}-16s^{2}x^{2}}.
\]
Since the function $(4s^{2}+\left(  \operatorname{Re}b\right)  ^{2}%
)^{2}-16s^{2}x^{2}$ on the interval $[-\left\vert \operatorname{Re}%
b\right\vert ,\left\vert \operatorname{Re}b\right\vert ]$ attains its minimum
value at the points $x=\left\vert \operatorname{Re}b\right\vert $ or
$x=-\left\vert \operatorname{Re}b\right\vert ,$ in the both cases, we obtain
\begin{equation}
\sum_{n\in\mathbb{Z}}\frac{1}{\left\vert \lambda-(2n+t+ib)\right\vert ^{2}%
}\leq\frac{1}{\left(  \operatorname{Re}b\right)  ^{2}}+\sum_{s\in\mathbb{N}%
}\frac{1}{\left(  2s+\left\vert \operatorname{Re}b\right\vert \right)  ^{2}%
}+\sum_{s\in\mathbb{N}}\frac{1}{\left(  2s-\left\vert \operatorname{Re}%
b\right\vert \right)  ^{2}}. \tag{38}%
\end{equation}
Now, using the obvious equalities
\[
\sum_{s\in\mathbb{N}}\frac{1}{\left(  2s+\left\vert \operatorname{Re}%
b\right\vert \right)  ^{2}}<\sum_{n=1}^{\infty}\frac{1}{(2s)^{2}}=\frac{1}%
{24}\pi^{2}%
\]
and%
\[
\sum_{s\in\mathbb{N}}\frac{1}{\left(  2s-\left\vert \operatorname{Re}%
b\right\vert \right)  ^{2}}<\sum_{n=1}^{\infty}\frac{1}{(2s-1)^{2}}=\frac
{1}{8}\pi^{2},
\]
we obtain%
\[
\sum_{n\in\mathbb{Z}}\frac{1}{\left\vert \lambda-(2n+(t+ib)\right\vert ^{2}%
}<\frac{1}{\left(  \operatorname{Re}b\right)  ^{2}}+\frac{1}{6}\pi^{2}.
\]
Thus, (36) for the case $j=2$ is proved. In the same way, using the arguments
from the proof of (34), we prove (36) for the case $j=1.$ Now, instead of (27)
and (29) using (35) and (36) and repeating the proof of Lemma 1, we obtain the
proof of this lemma.
\end{proof}

Now, we are ready to prove the main result of this section.

\begin{theorem}
Suppose that either the conditions of Lemma 1 or the conditions of Lemma 2
hold. Then:

$(a)$ All eigenvalues of the operators $L_{t}(Q)$ for all $t\in(-1,1]$ are simple.

$(b)$ $L(Q)$ is a spectral operator.

$(c)$ For each $f$ $\in L_{2}^{m}(-\infty,\infty),$ the following equality
holds:
\[
f=\frac{1}{2}%
{\displaystyle\sum\limits_{j=1,2;\text{ }k\in\mathbb{Z}_{{}}}}
\int\limits_{(-1,1]}a_{k,j}(t)\Psi_{k,j,t}dt,
\]
where the series converges in the $L_{2}^{2}(a,b)$-norm for every
$a,b\in\mathbb{R}.$
\end{theorem}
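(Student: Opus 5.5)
For part $(a)$ I will use Lemma 1 (or Lemma 2) together with a continuity/counting argument. By Remark 1, the eigenvalues of $L_t(Q_b)$ are the simple points $2k\pm(t+ib)$. By the lemma, the circles $C(r,2k\pm t\pm ib)$, with $r=1$ or $r=\left\vert \operatorname{Re}b\right\vert$, lie in $\rho(L_t(Q))$. These discs are pairwise disjoint, since neighbouring centres are at distance at least $\min\{2,2\left\vert \operatorname{Re}b\right\vert\}\geq 2r$.

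The key step is to show that each disc contains exactly one eigenvalue of $L_t(Q)$, counted with multiplicity. I would consider the family $Q_\varepsilon=Q_b+\varepsilon(Q-Q_b)$ for $\varepsilon\in[0,1]$. This family has the same $b$, and since (27), respectively (35), is homogeneous of degree two in $Q-Q_b$, it is satisfied by every $Q_\varepsilon$. Hence the lemma applies for all $\varepsilon$, and the circles stay in the resolvent set throughout. The Riesz projection $e(C)$ of $L_t(Q_\varepsilon)$ onto the disc depends continuously on $\varepsilon$, so its rank is constant in $\varepsilon$. At $\varepsilon=0$ the rank is $1$.

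I also need to exclude eigenvalues outside the union of the discs. For this I would run the same argument as in Lemma 1 for an arbitrary $\lambda$ outside all discs: the sums in (29), respectively (36), are largest on the boundary circles (I expect a maximum-principle or monotonicity check of $\sum|\lambda-\mu_n|^{-2}$ to suffice). Alternatively, I would use Theorem 1$(a)$ and a large-square contour with the same homotopy to match total counts. This part is where I expect the main obstacle: justifying that no eigenvalue escapes the discs. I would try the direct estimate first.

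For part $(b)$ I would use the Riesz basis property. Since (2) holds, the root functions of $L_t(Q)$ form a Riesz basis for each $t$, and by $(a)$ all root functions are eigenfunctions. So for any contour $\gamma(t)$, $e(\gamma(t))$ is given by (21) with $\mathbb{D}(t)$ now allowed to include $|n|<N$. I would split $e(\gamma(t))$ into a finite part with $|n|<N$ and a tail part. The tail is bounded uniformly by (24), as in Theorem 2. The finite part is bounded by $\sum_{|n|<N}\|\Phi_{n,j,t}\|\|\Phi^\ast_{n,j,t}\|/|\alpha_{n,j}(t)|$. Each such rank-one projection equals the Riesz projection over the fixed circle $C(r,2n\pm t\pm ib)$. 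The circle moves continuously in $t$ and stays at positive distance from the spectrum. The resolvent is jointly continuous in $(t,\lambda)$, so I get a uniform bound over $t\in[-1,1]$ by compactness, handling the endpoint via the $t\mapsto t+2$ periodicity of the spectral data. Combining the two parts gives (4).

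For part $(c)$ I would take the expansion of Theorem 3 and observe that the finite sum over $|k|\leq N$ can now be written as separate integrals for each $k$. Each $a_{k,j}(t)\Psi_{k,j,t}$ is bounded and measurable in $t$, by the uniform bounds from $(b)$ and the simplicity from $(a)$. Exchanging the finite sum and the integral then yields the stated formula, with convergence in $L_2^2(a,b)$ inherited from Theorem 3.
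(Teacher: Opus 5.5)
Your proposal is correct and follows the paper's proof. Part (a) uses the homotopy $Q_b+\varepsilon(Q-Q_b)$, with the lemma's circles kept in the resolvent set. Part (b) splits $e(\gamma(t))$ into finitely many rank-one projections, bounded uniformly in $t$, plus the tail controlled by (24). Part (c) follows from Theorem 3.

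The step you flag as the obstacle, ruling out eigenvalues outside the discs, is skipped in the paper. Your maximum-principle idea does close it. The function $\lambda\mapsto\sum_{n,j}\left\vert \lambda-(2n+(-1)^{j}(t+ib))\right\vert ^{-2}$ is subharmonic off the centres, $2$-periodic in $\operatorname{Re}\lambda$, and tends to $0$ as $\left\vert \operatorname{Im}\lambda\right\vert \to\infty$. Hence off the open discs it never exceeds its maximum on the circles, which is strictly below the threshold, so the contradiction (30) applies there too.
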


\begin{proof}
$(a)$ Consider the family of operators $L_{t}(Q_{b}+\varepsilon(Q-Q_{b}))$ for
$\varepsilon\in\lbrack0,1].$ Instead of (26), using
\[
(\lambda(\varepsilon)-(2n+\left(  -1\right)  ^{j}(t+ib))(\Psi_{\lambda
(\varepsilon)},\varphi_{n,j,t})=(\Psi_{\lambda(\varepsilon)},\varepsilon
(Q-Q_{b})\varphi_{n,j,t}),
\]
where $\Psi_{\lambda(\varepsilon)}$ is the normalized eigenfunction of the
operator $L_{t}(Q_{b}+\varepsilon(Q-Q_{b})),$ and repeating the proof of Lemma
1, we obtain that if $\left\vert \operatorname{Re}b\right\vert \geq1$ and (27)
holds, then the circles $C(1,2k\pm t\pm ib),$ for all $k\in\mathbb{Z},$ lie in
the resolvent set of $L_{t}(Q_{b}+\varepsilon(Q-Q_{b}))$ for all
$\varepsilon\in\lbrack0,1].$ Since $L_{t}(Q_{b}+\varepsilon(Q-Q_{b}))$
$L_{t,\varepsilon}$ forms a holomorphic family with respect to $\varepsilon,$
and the operator $L_{t}(Q_{b})$ has exactly one eigenvalue inside each of this
circles, the operator $L_{t}(Q)$ must also have exactly one eigenvalue
(counting multiplicity) in this circle. This implies that all eigenvalues of
$L_{t}(Q)$ are simple. In the same way, we prove that all eigenvalues of
$L_{t}(Q)$ are simple if the conditions of Lemma 2 hold.

$(b)$ Let $\gamma(t)$ be a closed contour lying in the resolvent set
$\rho(L_{t}(Q)).$ Then, there exist pairs $(k_{1},j_{1}),(k_{2},j_{2}%
),\cdot\cdot\cdot,(k_{s},j_{s})$ from $\left\{  (k,j)\in\mathbb{Z}:\left\vert
k\right\vert <N,j=1,2\right\}  $ and a subset $\mathbb{D}(t)$ of the set
\[
\{(n,j):n\in\mathbb{Z},\mid n\mid\geq N,\text{ }j=1,2\}
\]
such that $\gamma(t)$ encloses the eigenvalues $\lambda_{k_{1},j_{1}%
}(t),\lambda_{k_{2},j_{2}}(t),\cdot\cdot\cdot,\lambda_{k_{s},j_{s}}(t)$ and
$\lambda_{n,j}(t)$ for $\left(  n,j\right)  \in\mathbb{D}(t)$, where $N$ is
defined in Theorem 1$(a)$ and does not depend on $t$. Then, we have
\begin{equation}
e(\gamma(t))f=%
{\textstyle\sum\limits_{i=1,2,...,s}}
\frac{1}{\alpha_{k_{i}j_{i}}(t)}(f,\Psi_{k_{i}j_{i},t}^{\ast})\Psi_{k_{i}%
j_{i},t}+%
{\textstyle\sum\limits_{\left(  n,j\right)  \in\mathbb{D}(t)}}
\frac{1}{\alpha_{n,j}(t)}(f,\Psi_{n,j,t}^{\ast})\Psi_{n,j,t}. \tag{39}%
\end{equation}
It is clear that
\begin{equation}
\left\Vert
{\textstyle\sum\limits_{i=1,2,...,s}}
\frac{1}{\alpha_{k_{i}j_{i}}(t)}(f,\Psi_{k_{i}j_{i},t}^{\ast})\Psi_{k_{i}%
j_{i},t}\right\Vert \leq%
{\textstyle\sum\limits_{i=1,2,...,s}}
\frac{\left\Vert f\right\Vert }{\left\vert \alpha_{k_{i}j_{i}}(t)\right\vert
}. \tag{40}%
\end{equation}
Moreover, $\left\vert \alpha_{k_{i}j_{i}}(t)\right\vert $ depends continuously
on $t$ and $\alpha_{k_{i}j_{i}}(t)\neq0$ for all $t\in\lbrack-1,1].$
Therefore, it follows from (21), (24), (39), and (40) that
\[
\left\Vert e(\gamma(t))\right\Vert <m_{6}%
\]
for all $t\in(-1,1]$ and $\gamma(t)\subset\rho\left(  L_{t}\right)  .$ Thus,
the proof of $(b)$ follows from Definition 1.

$(c)$ Since all eigenvalues are simple, the expression $a_{k,j}(t)\Psi
_{k,j,t}$ is integrable for all $j=1,2$ and $k\in\mathbb{Z}.$ Therefore, the
proof of $(c)$ follows from Theorem 3.
\end{proof}

\begin{remark}
Since the eigenvalues of $L_{t}(Q+aI),$ where $a\in\mathbb{C}$ and $I$ \ is
the identity matrix, are obtained by shifting the eigenvalues of $L_{t}(Q),$
and the eigenfunctions remain the same, Theorem 4 continues to hold if (27)
and (35) are replaced, respectively, by
\[
\inf_{a}\int\limits_{0}^{\pi}\left\vert a_{1}(x)+a\pm\left(  a_{2}%
(x)+b\right)  i\right\vert ^{2}+\left\vert a_{3}(x)-b\pm\imath(a_{4}%
(x)+a)\right\vert ^{2}dx\leq\frac{4}{\pi},
\]
and
\[
\inf_{a}\int\limits_{0}^{\pi}\left\vert a_{1}(x)+a\pm\left(  a_{2}%
(x)+b\right)  i\right\vert ^{2}+\left\vert a_{3}(x)-b\pm\imath(a_{4}%
(x)+a)\right\vert ^{2}\leq2\pi\left(  \frac{2}{\left(  \operatorname{Re}%
b\right)  ^{2}}+\frac{1}{3}\pi^{2}\right)  ^{-1}.
\]

\end{remark}

\end{document}